\def\subsection{\@startsection{section}{1}%
  \z@{1.2\linespacing\@plus\linespacing}{.5\linespacing}%
  {\normalfont\scshape\centering}}
\def\subsection{\@startsection{subsection}{2}%
  \z@{0.9\linespacing\@plus.7\linespacing}{-.5em}%
  {\normalfont\bfseries}}
\definecolor{gr}{rgb}   {0.,   0.6,   0.25 }
\definecolor{mg}{rgb}   {0.85,  0.,    0.85}
\definecolor{marin}{rgb}   {0.,   0.,   0.8}
\definecolor{rouge}{rgb}   {0.8,   0.,   0.}
\definecolor{orange}{rgb}   {0.8,   0.4,   0.}
\newtheorem{theorem}{Theorem}[section]
\newtheorem{lemma}[theorem]{Lemma}
\newtheorem{proposition}[theorem]{Proposition}
\newtheorem{corollary}[theorem]{Corollary}
\theoremstyle{definition}
\theoremstyle{remark}
\newtheorem{remark}[theorem]{Remark}
\numberwithin{equation}{section}
\newcommand{\dd}[1]{_{\raise-0.3ex\hbox{$\scriptstyle #1$}}}
\newcommand{\di}{\displaystyle}
\newcommand{\on}[1]{\raise-.5ex\hbox{\big|}_{#1}}
\newcommand{\eps}{\varepsilon}
\renewcommand{\div}{\operatorname{\rm div}}
\newcommand{\curl}{\operatorname{\rm curl}}
\newcommand\C{{\mathbb C}}
\newcommand\R{{\mathbb R}}
\newcommand\N{{\mathbb N}}
\newcommand\T{{\mathbb T}}
\newcommand\Z{{\mathbb Z}}
\begin{document}

\title[Limit Sobolev regularity for  Dirichlet and Neumann problems on Lipschitz domains]{On the limit Sobolev regularity for  Dirichlet and Neumann problems on Lipschitz domains}
\author{Martin Costabel}
%\date{2017-10-18}
\date{2017-11-19}

\maketitle

\begin{abstract}
We construct a bounded $C^{1}$ domain $\Omega$ in $\R^{n}$ for which the $H^{3/2}$ regularity for the Dirichlet and Neumann problems for the Laplacian cannot be improved, that is, there exists $f$ in $C^{\infty}(\overline\Omega)$ such that the solution of  $\Delta u=f$ in $\Omega$ and either $u=0$ on $\partial\Omega$ or $\partial_{n} u=0$ on $\partial\Omega$ is contained in $H^{3/2}(\Omega)$ but not in $H^{3/2+\varepsilon}(\Omega)$ for any $\epsilon>0$. An analogous result holds for $L^{p}$ Sobolev spaces with $p\in(1,\infty)$.  
\end{abstract}

%==== Section ============
\section{Introduction}\label{S:intro}
The motivation for this note comes from a question of regularity of the time-harmonic Maxwell equations in Lipschitz domains.
In the variational theory of Maxwell's equations, basis for the analysis of many algorithms of numerical electrodynamics, the following two function spaces are fundamental:
\begin{align}
\nonumber
 X_{N} &= H(\div,\Omega)\cap H_{0}(\curl,\Omega) \\
\label{E:XNdef}
  &=\{u \in L^{2}(\Omega;\C^{3}) \mid
      \div u \in L^{2}(\Omega), \curl u \in L^{2}(\Omega;\C^{3}), 
      u\times n =0 \text{ on }\partial\Omega\}\\
\nonumber
 X_{T} &= H_{0}(\div,\Omega)\cap H(\curl,\Omega) \\
\label{E:XTdef}
  &=\{u \in L^{2}(\Omega;\C^{3}) \mid
      \div u \in L^{2}(\Omega), \curl u \in L^{2}(\Omega;\C^{3}),
      u\cdot n =0 \text{ on }\partial\Omega \}
\end{align} 
Here $n$ is the outward unit normal vector field on the boundary of the domain $\Omega\subset\R^{3}$.

If $\Omega$ is a bounded Lipschitz domain, then it has been known for a long time \cite{Weber,Picard1984} that  $X_{N}$ and $X_{T}$ are compactly embedded subspaces of $L^{2}(\Omega;\C^{3})$, and it has been shown more precisely \cite{Co_maxreg,Mitrea2002} that they are contained in the Sobolev space
$H^{\frac12}(\Omega,\C^{3})=W^{\frac12,2}(\Omega,\C^{3})$.
For large classes of more regular domains, $X_{N}$ and $X_{T}$ are contained in $H^{1}(\Omega,\C^{3})$ (see \cite{Amrouche-et-al} for $C^{1,1}$ domains, \cite{Filonov97} for $C^{\frac32+\eps}$ domains, \cite{Saranen1982} for convex domains, \cite{Taylor-Mitrea-Vasy} for ``almost convex'' domains). 
The regularity is diminished by corner singularities, but one also knows \cite{Amrouche-et-al} that for every Lipschitz polyhedron or, more generally, piecewise smooth domain $\Omega$ that is at least $C^{2}$-diffeomorphic to a polyhedron, there exists $\eps>0$ such that
\begin{equation}
\label{E:eps}
 X_{N} \cup X_{T} \subset H^{\frac12+\eps}(\Omega;\C^{3}) \,.
\end{equation}
The additional regularity described by $\eps$ is of some use in the numerical analysis of Maxwell's equations (see for example \cite{AlonsoValli1999,AinsworthGuzmanSayas2016}).
The parameter $\eps$ can become arbitrarily small, depending on the corner angles of $\partial\Omega$, but it depends only on these angles, that is, on the local Lipschitz constant of $\partial\Omega$. 
Based on this observation, one could ask the question whether for any Lipschitz domain $\Omega$, there exists such an $\eps>0$ for which \eqref{E:eps} holds. 
This question is the motivation for the present investigation.

To the best of the author's knowledge, the conjecture that such an $\eps>0$ always exists is not incompatible with the currently available regularity results for Maxwell's equations on Lipschitz domains, but we shall show that it is not true. As a corollary of our constructions, we obtain a counterexample that is even $C^{1}$.

\begin{proposition}
\label{P:XNXT}
There exists a bounded $C^{1}$ domain $\Omega\subset\R^{3}$, an $L^{2}(\Omega)$ function $g$ and an $L^{2}(\Omega;\C^{3})$ function $h$ such that the solutions $u\in L^{2}(\Omega;\C^{3})$ of the system
\begin{equation}
\label{E:divcurl}
 \div u = g\,,\qquad \curl u = h \quad\mbox{ in }\Omega
\end{equation} 
and either
\begin{equation}
\label{E:XNreg}
 u\times n = 0 \quad\mbox{ on }\partial\Omega
\end{equation}
or
\begin{equation}
\label{E:XTreg}
 u\cdot n = 0 \quad\mbox{ on }\partial\Omega
\end{equation}
do not belong to $H^{\frac12+\eps}(\Omega;\C^{3})$ for any $\eps>0$.\\
In the system \eqref{E:divcurl}, the field $h$ can be chosen to be zero and $g$ can be chosen to be continous on $\overline\Omega$.
\end{proposition}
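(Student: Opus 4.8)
The plan is to reduce the proposition to the scalar regularity result that is the main construction of the paper, via the potential $u=\grad\phi$. Specializing that construction to $n=3$, I would take a bounded, simply connected $C^1$ domain $\Omega\subset\R^3$ with connected boundary, a datum $f\in\Cinf(\overline\Omega)$, and a solution $\phi$ of $\Delta\phi=f$ in $\Omega$ satisfying either $\phi=0$ or $\partial_n\phi=0$ on $\partial\Omega$, with $\phi\in H^{3/2}(\Omega)$ but $\phi\notin H^{3/2+\eps}(\Omega)$ for every $\eps>0$. I would then set $u=\grad\phi$, $g=f$, and $h=0$. Since $\phi\in H^{3/2}(\Omega)$ we have $u\in H^{1/2}(\Omega;\C^3)\subset L^2(\Omega;\C^3)$, and $u$ solves \eqref{E:divcurl} because $\div u=\Delta\phi=f$ and $\curl u=\curl\grad\phi=0$. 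The Dirichlet condition $\phi=0$ makes the tangential (surface) gradient of $\phi$ vanish, hence $u\times n=0$, i.e.\ \eqref{E:XNreg}; the Neumann condition $\partial_n\phi=0$ gives $u\cdot n=0$, i.e.\ \eqref{E:XTreg}. This already yields the final sentence of the proposition: $h=0$ and $g=f$ is even $\Cinf$ on $\overline\Omega$.

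Next I would verify that this $u$ is the \emph{only} $L^2$ solution, so that the conclusion concerns ``the solutions'' and not merely one of them. If $u'$ is another $L^2$ solution with the same boundary condition, then $w=u'-\grad\phi$ satisfies $\div w=0$, $\curl w=0$, and either $w\times n=0$ or $w\cdot n=0$. As $\Omega$ is simply connected, the curl-free field $w$ is a gradient, $w=\grad q$ with $q$ harmonic; in the Dirichlet case $w\times n=0$ forces $q$ to be constant on the (connected) boundary, while in the Neumann case $\partial_n q=0$, and in either case the maximum principle (respectively Neumann uniqueness up to a constant) gives $\grad q=0$. Hence $w=0$. Equivalently, the spaces of harmonic fields in $X_N$ and $X_T$ are trivial on a simply connected domain with connected boundary.

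Finally I would transfer the non-regularity from $\phi$ to $u$. Since the Sobolev spaces are nested, it suffices to rule out $u\in H^{1/2+\eps}(\Omega;\C^3)$ for small $\eps$, so fix $\eps\in(0,\tfrac12)$ and suppose $\grad\phi\in H^{1/2+\eps}(\Omega;\C^3)$. For $s=\tfrac12+\eps\in(\tfrac12,1)$ the Gagliardo--Slobodeckij norm gives, on any bounded open set, the equivalence $\|v\|_{H^{s+1}}^2\simeq\|v\|_{L^2}^2+\|\grad v\|_{H^s}^2$. Applying this to $v=\phi$, together with $\phi\in H^{3/2}(\Omega)\subset L^2(\Omega)$, would yield $\phi\in H^{3/2+\eps}(\Omega)$, contradicting the construction. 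Therefore $u\notin H^{1/2+\eps}(\Omega;\C^3)$ for any $\eps>0$, which is the assertion.

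Essentially all of the difficulty sits in the scalar construction invoked at the outset; within this corollary there is no serious obstacle. The two points that need care are the uniqueness argument (so that the statement holds for every solution, not just $\grad\phi$), which I handle by the triviality of the harmonic fields on a simply connected domain with connected boundary, and the choice $\eps\in(0,\tfrac12)$ that keeps the Sobolev characterization at the elementary level of the Gagliardo seminorm and thereby avoids invoking any boundary regularity that a merely $C^1$ domain might fail to provide.
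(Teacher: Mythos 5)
Your proposal is correct and is essentially the paper's own proof: the paper likewise sets $u=\nabla v$ with $v$ the solution from Theorem~\ref{T:main} for $d=3$, $p=2$, so that $h=0$, the Dirichlet/Neumann conditions transfer to \eqref{E:XNreg}/\eqref{E:XTreg}, and the non-regularity of $u$ follows from the equivalence $v\in W^{1+\frac1p+\eps,p}(\Omega)\iff \nabla v\in W^{\frac1p+\eps,p}(\Omega;\C^{3})$ for $v\in H^{1}(\Omega)$. The two embellishments you add are sound but peripheral: the uniqueness of the $L^{2}$ solution (via triviality of harmonic fields on a simply connected domain with connected boundary, which the paper's star-shaped $\Omega$ indeed is) is left implicit in the paper, and your assumption that the Neumann datum can be taken in $C^{\infty}(\overline\Omega)$ for $d=3$ is slightly stronger than what the paper actually constructs (only $C^{\alpha}(\overline\Omega)$, per Remark~\ref{R:gregular}), though continuity is all the proposition asserts.
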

As we will see in the following, analogous results are true in dimension $2$ and in higher dimensions, and also for non-Hilbert Sobolev spaces over $L^{p}$ with $p$ different from $2$. 

Non-regular solutions of the div-curl system \eqref{E:divcurl} are typically sought as gradients of solutions of the inhomogeneous Laplace (Poisson) equation with either Dirichlet (for \eqref{E:XNreg}) or Neumann (for \eqref{E:XTreg}) boundary conditions. A non-regularity result for these Laplace boundary value problems is the main result of this paper, see Theorem~\ref{T:main} below. It will be  
proved in Section~\ref{S:d=2} for dimension $d=2$ and in Section~\ref{S:d>2} for higher dimensions.

We use the standard notation $W^{s,p}(\Omega)$ for the Sobolev-Slobodeckij spaces on $\Omega\subset\R^{d}$, and we recall that for $0<s<1$ the seminorm
\begin{equation}
\label{E:sp-seminorm}
 |u|_{s,p;\Omega} =
 \left(\int_{\Omega}\int_{\Omega}\frac{|u(y)-u(x)|^{p}}{|y-x|^{d+sp}}dx\,dy
 \right)^{\frac1p} 
\end{equation}
defines the norm 
$\|u\|_{W^{s,p}(\Omega)}=\|u\|_{L^{p}(\Omega)}+|u|_{s,p;\Omega}$,
that $W^{0,p}(\Omega)=L^{p}(\Omega)$,
 and that for any $s$ there holds
$$
u\in W^{s+1,p}(\Omega)\iff u\in W^{s,p}(\Omega) \mbox{ and }\nabla u\in W^{s,p}(\Omega;\C^{d})\,.
$$

In order to describe known regularity results, we also need the Bessel potential spaces $H^{s,p}(\Omega)$, which are different from $W^{s,p}(\Omega)$ if $p\ne2$. For the main properties of these spaces, see \cite{Triebel1978}. In Triebel's notation
$W^{m,p}(\Omega) = F^{m}_{p,2}(\Omega)$ for $m\in\N$ and
$$
 H^{s,p}(\Omega) = F^{s}_{p,2}(\Omega)\,, \quad\mbox{ and for $s\not\in\Z$ : }\;
 W^{s,p}(\Omega) = B^{s}_{p,p}(\Omega)\,. 
$$ 
Note that the trace space for both $W^{s,p}(\Omega)$ and $H^{s,p}(\Omega)$ on a sufficiently smooth boundary is $W^{s-\frac1p,p}(\partial\Omega)$ if $s>\tfrac1p$.
 
Comprehensive regularity results in the $H^{s,p}$ spaces for the Dirichlet and Neumann problems on Lipschitz domains were given by Jerison and Kenig \cite{JerisonKenig1981Neu,JerisonKenig1995}. In particular they studied the question for which $s$ and $p$ the condition $g\in H^{s-2,p}(\Omega)$ implies $v\in H^{s,p}(\Omega)$ for the solutions $v$ of the problems
\begin{align}
\label{E:LapDir}
 \Delta v &= g \quad\mbox{ in }\Omega\,,
 &v&=0 \quad\mbox{ on }\partial\Omega\\
\label{E:LapNeu}
 \Delta v &= g \quad\mbox{ in }\Omega\,,
 &\tfrac{\partial v}{\partial n} &=0 \quad\mbox{ on }\partial\Omega
\end{align} 
For the maximal regularity one finds a limit at $s=1+\tfrac1p$. We summarize the main results pertaining to the question of maximal regularity (here formulated for the Dirichlet problem, see \cite[Thms 1.1--1.3]{JerisonKenig1995}, where $H^{s,p}$ is written $L_{s}^{p}$ ):  

For any bounded Lipschitz domain $\Omega\subset\R^{d}$, $d\ge2$, there exists $p_{0}\ge1$ such that for $p_{0}<p<\frac{p_{0}}{p_{0}-1}$ and $\frac1p<s<1+\frac1p$ the solution $v$ of the Dirichlet problem \eqref{E:LapDir} with $g\in H^{s-2,p}(\Omega)$ belongs to $H^{s,p}(\Omega)$. In general, $p_{0}>1$ and there are counterexamples as soon as $p$ or $s$ are outside of the given bounds, but when $\Omega$ is a $C^{1}$ domain, one can choose $p_{0}=1$. 
When $p>2$, there are Lipschitz counterexamples with $g\in C^{\infty}(\overline\Omega)$ and $v\not\in W^{1+\frac1p,p}(\Omega)$.
There is a $C^{1}$ counterexample for $p=1$ with $g\in C^{\infty}(\overline\Omega)$ and $v\not\in W^{2,1}(\Omega)$.
In the optimal regularity-shift result for $C^{1}$ domains, the condition on $s$ cannot be weakened, because for any $p>1$ there exists a bounded $C^{1}$ domain $\Omega$ and a $g\in H^{-1+\frac1p,p}(\Omega)$ such that $v\not\in H^{1+\frac1p,p}(\Omega)$. On the other hand, if $g$ is more regular, for example $g\in H^{-1+\frac1p+\eps,p}(\Omega)$ for some $\eps>0$ and $p>1$, then $v\in H^{1+\frac1p,p}(\Omega)$ follows. 
The latter result is obtained by subtracting from $v$ a solution $v_{0}\in H^{1+\frac1p+\eps,p}(\Omega)$ of $\Delta v_{0}=g$ without boundary conditions and observing that a harmonic function with trace in $W^{1,p}(\partial\Omega)$ belongs to $H^{1+\frac1p,p}(\Omega)$.

We will prove that one cannot have $v\in H^{1+\frac1p+\eps,p}(\Omega)$ for any $\eps>0$, in general, even for more regular $g$. Because of the mutual inclusions 
$H^{s+\eps,p}\subset W^{s,p}\subset H^{s-\eps,p}$ for any $\eps>0$, the result is equivalently formulated in the scale of $W^{s,p}$ spaces.

\begin{theorem}
\label{T:main}
 In $\R^{d}$, $d\ge2$, there exists a bounded $C^{1}$ domain $\Omega$ and for both the Dirichlet problem \eqref{E:LapDir} and the Neumann problem \eqref{E:LapNeu} functions $g\in L^{\infty}(\Omega)$ such that the solutions 
 $v\in H^{1}(\Omega)$ do not belong to $W^{1+\frac1p+\eps,p}(\Omega)$ for any $p\in[1,\infty)$ and any $\eps>0$.
%% 
%% In $\R^{d}$, $d\ge2$, there exists a bounded $C^{1}$ domain $\Omega$ and a function $g\in L^{\infty}(\Omega)$ such that for any $p\in[1,\infty)$ and any $\eps>0$ the solutions 
%% $v\in H^{1}(\Omega)$ of 
%%\begin{equation}
%%\label{E:Laplace}
%% \Delta v = g \qquad \mbox{ in }\Omega
%%\end{equation} 
%%and either
%%\begin{equation}
%%\label{E:Dir}
%% v = 0 \quad\mbox{ on }\partial\Omega
%%\end{equation}
%%or
%%\begin{equation}
%%\label{E:Neu}
%% \tfrac{\partial v}{\partial n} = 0 \quad\mbox{ on }\partial\Omega\,,
%%\end{equation}
%%do not belong to $W^{1+\frac1p+\eps,p}(\Omega)$.
\end{theorem}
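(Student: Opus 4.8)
The plan is to reduce everything to a two--dimensional construction based on conformal mapping, and to deduce the higher--dimensional statement from it by a slicing argument. In dimension two the idea is not to solve a boundary value problem on a prescribed bad domain, but to prescribe simultaneously a very regular solution on the unit disk $\mathbb{D}$ and a conformal map $\Phi\colon\mathbb{D}\to\Omega$, so that all the irregularity is concentrated in the boundary behaviour of $\Phi$. Take $\Phi'=e^{h}$ with $h$ holomorphic on $\mathbb{D}$ and continuous on $\overline{\mathbb{D}}$. If $\|h\|_{\infty}<\pi/2$ then $\Re\Phi'>0$, so by the Noshiro--Warschawski criterion $\Phi$ is univalent; since $\Phi'$ is continuous and non--vanishing up to the boundary, $\Omega=\Phi(\mathbb{D})$ is a bounded $C^{1}$ Jordan domain and $|\Phi'|$ is bounded above and below.

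The key point is the Wirtinger identity for the composition $V=v\circ\Phi$, valid for any $v$:
\[
 (\partial_{x_{1}}v+i\,\partial_{x_{2}}v)\circ\Phi=\frac{\cD V}{\overline{\Phi'}},
 \qquad \cD V:=\partial_{x_{1}}V+i\,\partial_{x_{2}}V ,
\]
so the boundary regularity of $\nabla v$ along $\partial\Omega$ is governed by that of $1/\overline{\Phi'}$ wherever $\cD V$ is smooth and non--vanishing. For the Dirichlet problem I would take $V(z)=1-|z|^{2}$, so $V=0$ on $\partial\mathbb{D}$, $\Delta V=-4$, and $\cD V=-2z$; for the Neumann problem I would take a disk function with vanishing radial and non--vanishing tangential derivative on an arc, e.g.\ $V=(2r^{2}-r^{4})\cos\theta$, for which $\partial_{r}V|_{r=1}=0$ and $\Delta V=(6-15r^{2})\cos\theta$ is bounded. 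In either case $v=V\circ\Phi^{-1}$ is Lipschitz on $\overline\Omega$ (hence in $H^{1}$), satisfies the correct homogeneous boundary condition because conformal maps preserve both the Dirichlet and the Neumann condition, and has $g:=\Delta v=\bigl(\Delta V/|\Phi'|^{2}\bigr)\circ\Phi^{-1}\in L^{\infty}(\Omega)$ since $|\Phi'|$ is bounded below.

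The non--regularity then follows from a trace argument. Suppose $v\in W^{1+\frac1p+\eps,p}(\Omega)$ for some $p\in[1,\infty)$ and $\eps\in(0,1)$ (it suffices to exclude small $\eps$). Then $\nabla v\in W^{\frac1p+\eps,p}(\Omega)$, and since $\tfrac1p+\eps>\tfrac1p$ the trace theorem on the $C^{1}$ domain $\Omega$ gives $\nabla v|_{\partial\Omega}\in W^{\eps,p}(\partial\Omega)$. Pulling back by the bi--Lipschitz map $\Phi|_{\partial\mathbb{D}}$ (which preserves $W^{\eps,p}$ for $0<\eps<1$) and using the identity above together with $1/\overline{\Phi'(e^{i\theta})}=e^{-\overline{h(e^{i\theta})}}$, this would force the boundary trace of $h$ into $W^{\eps,p}(\partial\mathbb{D})$, because composition with $\exp$ on a bounded set and multiplication by the smooth factors $\cD V$ and $e^{i\theta}$ preserve both membership and non--membership. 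It therefore suffices to choose $h$ with continuous boundary values lying in \emph{no} $W^{\eps,p}(\partial\mathbb{D})$. The lacunary series $h(z)=\delta\sum_{k\ge1}k^{-2}z^{2^{k}}$ achieves this: $\sum_{k}\delta k^{-2}<\infty$ makes $h$ continuous on $\overline{\mathbb{D}}$ (and $\delta$ small enough for univalence), while the separated Littlewood--Paley blocks give, for $W^{\eps,p}=B^{\eps}_{p,p}$, a seminorm $\sim\sum_{k}2^{2^{k}\eps p}(\delta k^{-2})^{p}=\infty$ for every $\eps>0$ and every $p\in[1,\infty)$. This contradicts $v\in W^{1+\frac1p+\eps,p}(\Omega)$.

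For $d>2$ the singular mechanism is two--dimensional and is transplanted by a cylinder construction. Placing the planar domain $\omega$ above in the $(x_{1},x_{d})$--plane, one builds a bounded $C^{1}$ domain $\Omega\subset\R^{d}$ that coincides with $\omega\times(-1,1)^{d-2}$ near the singular boundary arc and is smoothly capped off elsewhere, and sets $v=v_{2}(x_{1},x_{d})$ there, extended by a cutoff in the flat variables to a boundary--condition--compatible, otherwise smooth function. Then $g=\Delta v\in L^{\infty}(\Omega)$, the boundary conditions are inherited since the normal lies in the $(x_{1},x_{d})$--plane on the flat part, and Fubini applied to the Gagliardo seminorm \eqref{E:sp-seminorm} shows that $\|v\|_{W^{s,p}(\Omega)}$ dominates a constant times $|v_{2}|_{s,p;\omega}$, so the planar non--regularity propagates. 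I expect the main obstacle to be the first step, namely producing a single $h$ whose boundary values are at once (i) continuous, so that $\Omega$ is genuinely $C^{1}$, (ii) outside every $W^{\eps,p}(\partial\mathbb{D})$ simultaneously, and (iii) boundary values of a non--vanishing holomorphic function so that the conformal machinery applies; the lacunary series reconciles these requirements, and verifying the simultaneous failure across all pairs $(\eps,p)$ through the Besov estimate is the technical heart of the argument.
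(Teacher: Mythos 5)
Your construction takes a genuinely different route from the paper's. The paper builds a star-shaped domain $r<F(\theta)$ with $F'$ a lacunary series and proves a \emph{separation property} (Proposition~\ref{P:sep}, resting on Lemma~\ref{L:fnotHeps}): the Gagliardo integral $\int|f(y)-f(x)|^{p}|y-x|^{-1-p\varepsilon}\,dy$ diverges for \emph{every} $x$, so $af=b$ with $a,b\in W^{\varepsilon,p}$ forces $a\equiv 0$; the contradiction then comes from Green's formula. You instead prescribe a smooth solution $V$ on the disk and push it forward by a conformal map with $\Phi'=e^{h}$, concentrating all roughness in the boundary values of $h$. For the \emph{Dirichlet} problem in $d=2$ your argument is correct modulo standard facts (Noshiro--Warschawski and Carath\'eodory for the $C^{1}$ Jordan domain, the trace theorem into $W^{\varepsilon,p}(\partial\Omega)$, bi-Lipschitz invariance and Lipschitz-composition stability of the Gagliardo seminorm, the Besov characterization of Hadamard-lacunary series): since $\mathcal{D}V=-2z$ is smooth and \emph{non-vanishing on the whole circle}, you only ever need global non-membership of $h$ in $W^{\varepsilon,p}(\partial\mathbb{D})$, which your Littlewood--Paley computation gives (the weight should read $2^{k\varepsilon p}$, not $2^{2^{k}\varepsilon p}$, since the frequencies are $2^{k}$; the sum still diverges). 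The cylinder-plus-cutoff-plus-slicing reduction to $d\ge 3$ is the same device the paper uses and is fine in outline.

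The \emph{Neumann} half, however, has a genuine gap. Your $V=(2r^{2}-r^{4})\cos\theta$ has boundary gradient $\mathcal{D}V=-ie^{i\theta}\sin\theta$, which vanishes at $\theta=0,\pi$, and this is unavoidable: for any single-valued $V$ with $\partial_{r}V=0$ on the circle, the tangential derivative $\partial_{\theta}V(1,\cdot)$ has zero mean over $\partial\mathbb{D}$ and therefore must vanish somewhere (or vanish identically, in which case the argument gives nothing). Consequently ``multiplication by $\mathcal{D}V$ preserves non-membership'' is false as stated: from the trace hypothesis you can only conclude $e^{-\overline{h}}\in W^{\varepsilon,p}(I)$, hence $h\in W^{\varepsilon,p}(I)$, on arcs $I$ avoiding the zeros of $\sin\theta$, and your \emph{global} Besov divergence does not exclude this -- a priori $h$ could fail to lie in $W^{\varepsilon,p}(\mathbb{T})$ solely because of bad behaviour near $\theta=0,\pi$. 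Closing the gap requires a \emph{local} non-membership statement: either a localization principle for Hadamard-lacunary series (true, but it needs a Fourier-analytic argument showing $\varphi h\notin B^{\varepsilon}_{p,p}$ for a cutoff $\varphi$, with control of the leakage of $\varphi e^{i2^{k}\theta}$ across dyadic blocks), or a pointwise-divergence lemma in the spirit of Lemma~\ref{L:fnotHeps}. Note that the paper's lemma demands far sparser frequencies than yours: conditions \eqref{E:c-}--\eqref{E:c+} with the small constant $\gamma$ of \eqref{E:gamma} fail for $b_{k}=2^{k}$, $a_{k}=\delta k^{-2}$ (the ratio $\sum_{k<m}a_{k}b_{k}/(a_{m}b_{m})$ tends to $1$, while $\gamma\le\tfrac15$), which is exactly why the paper takes $b_{k}=2^{q^{k}}$. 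So as written the Neumann case is not proved; the Dirichlet case and the higher-dimensional reduction are sound.
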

\begin{remark}
\label{R:gregular}
It will follow from the proof that in dimension $d=2$, there are functions $g\in C^{\infty}(\overline\Omega)$ that provide examples, even $g=1$ is possible for the Dirichlet problem and a second degree polynomial $g$ for the Neumann problem. See also Remark~\ref{R:DirEF}.
In dimension $d\ge3$, there is still an example with $g=1$ for the Dirichlet problem, and examples with $g\in C^{\alpha}(\overline\Omega)$, $\alpha>0$,  for the Neumann problem.
\end{remark}

\begin{remark}
\label{R:known}
Not all of this is new: 
For $p=1$, the counterexample from \cite[Theorem~1.2(b)]{JerisonKenig1995} shows that the result for the Dirichlet problem holds even with $\eps=0$.
Moreover, for $p>2$ the result of Theorem~\ref{T:main} is not interesting in the class of Lipschitz domains, because singularities at conical points provide a limit of regularity that is strictly below $s=1+\frac1p$. But for $C^{1}$ domains the result still seems to be new even for $p>2$. We provide a proof that works for any $p\ge1$, because there is no extra cost with respect to the proof for $p=2$. One just has to be careful to observe that the same domain $\Omega$ and the same function $g$ give an example valid for all $p$ and all $\eps$.
\end{remark}

Proposition~\ref{P:XNXT} follows from Theorem~\ref{T:main} for $p=2$, $d=3$ if we take $u=\nabla v$ (``electrostatic field''). The Laplace equation for $v$ implies the div-curl system \eqref{E:divcurl} for $u$ with $h=0$, and the Dirichlet and Neumann conditions in \eqref{E:LapDir} and \eqref{E:LapNeu} for $v$ imply the vanishing of the tangential component \eqref{E:XNreg} or of the normal component \eqref{E:XTreg}, respectively. Finally, $v\in W^{1+\frac1p+\eps,p}(\Omega)$ is equivalent to 
$u\in W^{\frac1p+\eps,p}(\Omega;\C^{3})$.

The construction of our counterexample uses the ideas of Filonov in the paper \cite{Filonov97}, where he considers a related question for $\eps=\frac12$ and constructs a $C^{\frac32}$ domain $\Omega$ that satisfies, among other interesting properties 
\[
 H^{2}(\Omega) \cap H^{1}_{0}(\Omega) = H^{2}_{0}(\Omega)\,,
\]
that is, the homogeneous Dirichlet condition for $H^{2}$ functions implies the homogeneous Neumann condition,
see also \cite{BuCoSh02}. 
Generalizing this, the $C^{1}$ domain $\Omega$ that we will construct satisfies
\begin{equation}
\label{E:Dir=>Neu}
 W^{1+\frac1p+\eps,p}(\Omega)\cap W_{0}^{1,p}(\Omega) = 
 W_{0}^{1+\frac1p+\eps,p}(\Omega)
 \quad \forall 1\le p<\infty\,,\eps>0\,.
\end{equation} 
 
%==== Section ============
\section{Generalizing Filonov's separating function}\label{S:FilonovEps}
We construct a continuous real-valued function $f$ on $\T=\R/(2\pi\Z)$ with the following property: 
If $a$ and $b$ belong to $W^{\varepsilon,p}(\T)$ for some $\epsilon>0$, $p\ge1$, and
$af=b$, then $a=b=0$.

The construction and proof are modeled after Filonov's construction of a $C^{\frac12}$ function that has the above separation property for $\varepsilon=\frac12$ and $p=2$. It is in the lineage of Weierstrass' example of a continuous nowhere differentiable function.

We define $f$ via a lacunary Fourier series
\begin{equation}
\label{E:f}
 f(x) = \sum_{k=1}^{\infty}a_{k}\sin(b_{k}x) = \sum_{k=1}^{\infty}f_{k}(x)
\end{equation}
where the sequences $a_{k}>0$ and $b_{k}\in\N$ are chosen so that they satisfy 
$\sum a_{k}<\infty$ and $b_{k}\ge2$, 
$b_{k+1}\ge2b_{k}$, $k\ge1$, and the following properties for a given small constant $\gamma>0$ to be fixed later on (see \eqref{E:gamma}):
\begin{align}
\label{E:c-}
 \sum_{k=1}^{m-1}a_{k}b_{k} &\le \gamma\, a_{m}b_{m} &\quad\forall\,m\ge2\\
\label{E:c+}
 \sum_{k=m+1}^{\infty}a_{k} &\le \gamma\, a_{m} &\quad\forall\,m\ge1\\
\label{E:suminf}
 \sum_{m=1}^{\infty} a_{m}^{p}b_{m}^{\,p\varepsilon} &= +\infty
 &\quad\forall\,\varepsilon>0, \:p\ge1 \,.
\end{align}
We first show that for sufficiently large $q\in\N$ the sequences
$a_{k}=q^{-k}$, $b_{k}=2^{q^{k}}$
have the properties \eqref{E:c-}--\eqref{E:suminf}, and we shall keep this choice from now on.

For \eqref{E:c-}, let 
$s_{m}=\frac1{a_{m}b_{m}}\sum_{k=1}^{m-1}a_{k}b_{k}$. 
Noting that for $q\ge7$ we have 
$q^{2}\,2^{1-q}<1$, we show by induction that then $s_{m}<\frac1{q-1}$ for all $m\ge2$, which implies \eqref{E:c-} for $q$ large enough.
Indeed, 
$$
  s_{2}=\tfrac{a_{1}b_{1}}{a_{2}b_{2}}=q\,2^{(1-q)q}
  <q\,2^{1-q}<\tfrac1q<\tfrac1{q-1}\,,
$$ 
and if $s_{m}<\frac1{q-1}$ it follows that
$$
 s_{m+1}=(s_{m}+1)\tfrac{a_{m}b_{m}}{a_{m+1}b_{m+1}}
        =(s_{m}+1)\,q\,2^{(1-q)q^{m}} < (s_{m}+1)\,q\,2^{(1-q)}
        <(\tfrac{1}{q-1}+1)\tfrac1q
        =\tfrac1{q-1}\,.
$$

For \eqref{E:c+}, we have
$$
  \sum_{k=m+1}^{\infty}\frac{a_{k}}{a_{m}} = \sum_{k=1}^{\infty}q^{-k}
   = \frac{1}{q-1}
$$
which again is less than $\gamma$ for $q$ large enough.

For \eqref{E:suminf} we use that $2^{t}\ge t\log2$ for all $t>0$, so that
$
a_{m}^{p}b_{m}^{\,p\varepsilon} = (2^{\eps q^{m}}/q^{m})^{p}
 \ge (\eps\log2)^{p}
$ 
for all $m$.

\begin{lemma}
\label{L:fnotHeps}
 The function $f$ defined by \eqref{E:f} is continuous on $\T$ and satisfies
\begin{equation}
\label{E:fnotHeps}
 \int_{0}^{2\pi}\frac{|f(y)-f(x)|^{p}}{|y-x|^{1+p\varepsilon}}dy
 = +\infty
 \qquad \mbox{ for all } x\in[0,2\pi],\; \varepsilon>0, 1\le p<\infty\,.
\end{equation}
\end{lemma}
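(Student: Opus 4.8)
The continuity of $f$ is immediate: since $\sum_k a_k<\infty$ and $|a_k\sin(b_kx)|\le a_k$, the series \eqref{E:f} converges uniformly on $\T$ by the Weierstrass $M$-test, so $f$ is a uniform limit of continuous functions. For the divergence \eqref{E:fnotHeps} the plan is to fix $x$ and exhibit, at each scale $1/b_m$, a set of points $y$ of measure comparable to $1/b_m$ on which the increment $|f(y)-f(x)|$ is bounded below by a fixed multiple of $a_m$; summing the resulting lower bounds over $m$ and invoking \eqref{E:suminf} then forces the integral to be infinite.

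Concretely, for each $m$ I would work on the shell $A_m=[x+\pi/b_m,\,x+3\pi/b_m]$ (switching to the left-hand shell $[x-3\pi/b_m,\,x-\pi/b_m]$ when $x$ lies too close to the right endpoint, which is harmless since only large $m$, i.e.\ arbitrarily small scales, matter). On such a shell I split the increment as
\[
 f(y)-f(x)=\underbrace{\sum_{k<m}a_k\bigl(\sin(b_ky)-\sin(b_kx)\bigr)}_{\text{low}}
 +\,a_m\bigl(\sin(b_my)-\sin(b_mx)\bigr)\,
 +\underbrace{\sum_{k>m}a_k\bigl(\sin(b_ky)-\sin(b_kx)\bigr)}_{\text{high}}.
\]
The low part is controlled by the mean value inequality $|\sin(b_ky)-\sin(b_kx)|\le b_k|y-x|\le 3\pi\,b_k/b_m$ together with \eqref{E:c-}, giving a bound $\le 3\pi\gamma\,a_m$; the high part is controlled by $|\sin(b_ky)-\sin(b_kx)|\le 2$ and \eqref{E:c+}, giving a bound $\le 2\gamma\,a_m$. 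Thus both error terms are an arbitrarily small multiple of $a_m$ once $\gamma$ is fixed small enough.

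The middle term is the main contribution. As $y$ runs over $A_m$ the argument $b_my$ sweeps a full period, so $\sin(b_my)$ takes every value in $[-1,1]$. A short compactness argument shows there is a universal $\delta_0>0$ such that, for every possible value of $\sin(b_mx)$, the set
\[
 E_m=\bigl\{y\in A_m:\ |\sin(b_my)-\sin(b_mx)|\ge\tfrac12\bigr\}
\]
has measure at least $\delta_0/b_m$. On $E_m$ the middle term has modulus $\ge a_m/2$, so choosing $\gamma$ with $(3\pi+2)\gamma\le\tfrac14$ yields $|f(y)-f(x)|\ge a_m/4$ there. Using $|y-x|\le 3\pi/b_m$ on $E_m$, the contribution of this shell to the integral is bounded below by $4^{-p}(3\pi)^{-(1+p\varepsilon)}\delta_0\,a_m^p b_m^{p\varepsilon}$, a constant depending only on $p,\varepsilon,\delta_0$ times $a_m^p b_m^{p\varepsilon}$.

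Finally, the lacunarity $b_{m+1}\ge2b_m$ (in fact $b_{m+1}/b_m=2^{q^m(q-1)}$ is enormous) makes the shells $A_m$ pairwise disjoint, so the integral over $[0,2\pi]$ dominates $\sum_m\int_{E_m}$, which is bounded below by a constant times $\sum_m a_m^p b_m^{p\varepsilon}=+\infty$ by \eqref{E:suminf}. I expect the main obstacle to be precisely the increment lower bound on $E_m$: it is here that the gap conditions \eqref{E:c-} and \eqref{E:c+} are essential, since they guarantee that the single resonant frequency $b_m$ dominates all the others, so that the oscillation of $a_m\sin(b_mx)$ cannot be cancelled by the remaining terms. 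The measure bound $\meas(E_m)\ge\delta_0/b_m$ and the disjointness of the scales are by comparison routine.
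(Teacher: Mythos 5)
Your proof is correct, and while it follows the same overall skeleton as the paper's (localize at the scales $1/b_m$; split $f$ into low frequencies $k<m$ controlled by $|\sin(b_ky)-\sin(b_kx)|\le b_k|y-x|$ and \eqref{E:c-}, the resonant term $k=m$, and high frequencies $k>m$ controlled by $2a_k$ and \eqref{E:c+}; sum the per-scale contributions $\gtrsim a_m^pb_m^{p\varepsilon}$ via \eqref{E:suminf}), it handles the crucial resonant term in a genuinely different way. The paper works on the half-period shell $I_m=[1/b_m,2/b_m]$, applies the triangle inequality in $L^p(I_m)$, and bounds the resonant $L^p$ norm from below via H\"older by the $L^1$ quantity $\int_1^2|\sin(z+t)-\sin z|\,t^{-2}dt$, whose minimum over the phase $z$ is what defines $\gamma$ in \eqref{E:gamma}; it also disposes of the endpoint issue by the symmetry $f(2\pi-x)=f(x)$ rather than by your side-switching. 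You instead work on a full period and extract a good set $E_m$ of measure $\gtrsim 1/b_m$ on which the resonant increment is $\ge a_m/2$ \emph{pointwise} while the low and high errors are $\le(3\pi+2)\gamma a_m$ pointwise. Your route is more elementary -- no H\"older, and your compactness argument can even be avoided: if $\sin(b_mx)\ge0$ the set where $\sin(b_my)\le-\tfrac12$ already has measure $\tfrac{2\pi}{3}$ per period, and symmetrically if $\sin(b_mx)<0$ -- at the price of needing a full period. Two reconciliation points, neither a real gap: (i) disjointness of your shells $[x+\pi/b_m,\,x+3\pi/b_m]$ needs $b_{m+1}\ge 3b_m$, not merely the stated $b_{m+1}\ge 2b_m$, but as you note the actual ratio $2^{q^m(q-1)}$ is vastly larger; (ii) in the paper $\gamma$ is not a free parameter but is fixed by \eqref{E:gamma}, whereas you impose $(3\pi+2)\gamma\le\tfrac14$; this is harmless because \eqref{E:c-} and \eqref{E:c+} hold for \emph{any} prescribed $\gamma>0$ once $q$ is chosen large enough, so your argument applies to the same construction after possibly enlarging $q$.
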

\begin{proof}
Noting that with our even $b_{k}$ we have $f(2\pi-x)=f(x)$, so that it is sufficient to prove \eqref{E:fnotHeps} for $x\in[0,\pi]$. In this case $[x,x+1]\subset[0,2\pi]$, and therefore 
with $I_{m}=[\frac1{b_{m}},\frac2{b_{m}}]$ we have
\begin{equation}
\label{E:sumIm}
 \int_{0}^{2\pi}\frac{|f(y)-f(x)|^{p}}{|y-x|^{1+p\varepsilon}}dy
 \ge
 \sum_{m=1}^{\infty}\int_{I_{m}}\frac{|f(x+h)-f(x)|^{p}}{|h|^{1+p\varepsilon}}dh
\end{equation}
Now for $h\in I_{m}$ we estimate 
$$
 \Big(\int_{I_{m}}\frac{|f(x+h)-f(x)|^{p}}{|h|^{1+p\varepsilon}}dh\Big)^{\frac1p}
 \ge  J_{1} -J_{2}
$$
with
$\di
 J_{1}= \Big(\int_{I_{m}}\frac{|f_{m}(x+h)-f_{m}(x)|^{p}}{|h|^{1+p\varepsilon}}dh\Big)^{\frac1p}
$
and
$\di
 J_{2}=
 \sum_{k\ne m}\Big(\int_{I_{m}}\frac{|f_{k}(x+h)-f_{k}(x)|^{p}}{|h|^{1+p\varepsilon}}dh\Big)^{\frac1p}\,.
$

To estimate $J_{1}$,
we assume that $0<\varepsilon<1$ 
and make the change of variables $t=b_{m}h$ to obtain
$$
 J_{1} = a_{m}b_{m}^{\varepsilon}
  \Big(\int_{1}^{2}|\sin(b_{m}x+t)-\sin(b_{m}x)|^{p}t^{-(1+p\varepsilon)}
 dt\Big)^{\frac1p} \ge 5\,\gamma\,a_{m}b_{m}^{\,\varepsilon}\, ,
$$
where we defined 
\begin{equation}
\label{E:gamma}
 \gamma = \tfrac15
  \min_{z\in\T}
  \int_{1}^{2}|\sin(z+t)-\sin(z)|t^{-2}dt>0\,.
\end{equation}
Here we used H\"older's inequality,
\begin{align*}
  \int_{1}^{2}\frac{|\sin(z+t)-\sin(z)|}{t^{2}}dt
  &\le \int_{1}^{2}\frac{|\sin(z+t)-\sin(z)|}{t^{1+\eps}}dt\\
  &\le \Big(\int_{1}^{2}|\sin(z+t)-\sin(z)|^{p}\,t^{-(1+p\varepsilon)}
 dt\Big)^{\frac1p}\,
 \Big(\int_{1}^{2}\frac{dt}t\Big)^{1-\frac1p}\,.
\end{align*}

To estimate $J_{2}$,
we use for $k\le m-1$
$$
 |f_{k}(x+h)-f_{k}(x)| \le a_{k}b_{k}|h| \le 2a_{k}b_{k}\tfrac1{b_{m}} 
$$
and for $k\ge m+1$
$$
 |f_{k}(x+h)-f_{k}(x)| \le 2 a_{k} 
$$
so that we obtain with \eqref{E:c-}
$$
 \sum_{k=1}^{m-1}\Big(\int_{I_{m}}\frac{|f_{k}(x+h)-f_{k}(x)|^{p}}{|h|^{1+p\varepsilon}}dh\Big)^{\frac1p} \le 
 2\gamma a_{m}\Big(\int_{I_{m}}\frac{dh}{|h|^{1+p\varepsilon}}\Big)^{\frac1p}
 \le 2\gamma a_{m} b_{m}^{\,\eps}
$$
and with \eqref{E:c+}
$$
 \sum_{k=m+1}^{\infty}\Big(\int_{I_{m}}\frac{|f_{k}(x+h)-f_{k}(x)|^{p}}{|h|^{1+p\varepsilon}}dh\Big)^{\frac1p} \le
 2\gamma a_{m}\Big(\int_{I_{m}}\frac{dh}{|h|^{1+p\varepsilon}}\Big)^{\frac1p}
 \le 2\gamma a_{m} b_{m}^{\,\eps}\,,
$$
hence  
$J_{2} \le 4\gamma a_{m} b_{m}^{\,\eps}\,.$

Together, this gives
$$
 \Big(\int_{I_{m}}\frac{|f(x+h)-f(x)|^{p}}{|h|^{1+p\varepsilon}}dh\Big)^{\frac1p}
 \ge \gamma\,a_{m}b_{m}^{\varepsilon}\,, 
$$
and finally with \eqref{E:sumIm} and \eqref{E:suminf}
$$
 \int_{0}^{2\pi}\frac{|f(y)-f(x)|^{p}}{|y-x|^{1+p\varepsilon}}dy
 \ge \sum_{m=1}^{\infty} \gamma^{p} a_{m}^{p} b_{m}^{\,p\varepsilon} = +\infty\,.
$$
\end{proof}

\begin{proposition}
\label{P:sep}
 The function $f$ defined by \eqref{E:f} has the following separation property: Let $0<\varepsilon<1$, $p\ge1$ and $a,b\in W^{\eps,p}(0,2\pi)$. If $\,af=b$, then $a=b=0$.
\end{proposition}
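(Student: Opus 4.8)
The plan is to argue by contradiction using Lemma~\ref{L:fnotHeps}, which asserts that the pointwise Gagliardo density of $f$ is infinite at \emph{every} point. The heart of the matter is an elementary algebraic identity derived from $af=b$ that expresses the difference quotient of $f$ in terms of those of $a$ and $b$, \emph{without} ever dividing by $a(y)$ (which may vanish). First I would note that since $f$ is continuous on $\T$ it is bounded, say $|f|\le M$. Then, since $af=b$ holds a.e., for a.e.\ fixed $x$ and a.e.\ $y$ we have $a(x)f(x)=b(x)$ and $a(y)f(y)=b(y)$, whence
\[
 a(x)\bigl(f(y)-f(x)\bigr) = \bigl(b(y)-b(x)\bigr) - \bigl(a(y)-a(x)\bigr)f(y).
\]
Taking absolute values and using $|f(y)|\le M$ gives the pointwise bound $|a(x)|\,|f(y)-f(x)| \le |b(y)-b(x)| + M\,|a(y)-a(x)|$.

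Next I would raise this to the $p$-th power (using $(\alpha+\beta)^{p}\le 2^{p-1}(\alpha^{p}+\beta^{p})$), divide by $|y-x|^{1+p\eps}$, and integrate in $y$ over $(0,2\pi)$. Writing $g_{u}(x)=\int_{0}^{2\pi}|u(y)-u(x)|^{p}\,|y-x|^{-(1+p\eps)}\,dy$ for the inner Gagliardo integral, this yields, for each $x$ with $a(x)\ne0$,
\[
 \int_{0}^{2\pi}\frac{|f(y)-f(x)|^{p}}{|y-x|^{1+p\eps}}\,dy
 \le \frac{2^{p-1}}{|a(x)|^{p}}\bigl(g_{b}(x)+M^{p}\,g_{a}(x)\bigr).
\]
Thus, whenever $a(x)\ne0$ while both $g_{a}(x)$ and $g_{b}(x)$ are finite, the left-hand side is finite, which contradicts Lemma~\ref{L:fnotHeps}.

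It then remains to produce such a point $x$. Since $a,b\in W^{\eps,p}(0,2\pi)$ with $0<\eps<1$, their Gagliardo seminorms are finite, so by Fubini $g_{a}(x)<\infty$ and $g_{b}(x)<\infty$ for a.e.\ $x$. If $a$ were not $0$ a.e., the set $\{a\ne0\}$ would have positive measure and would therefore meet the full-measure set on which $g_{a}$ and $g_{b}$ are both finite; any $x$ in the intersection furnishes the contradiction. Hence $a=0$ a.e., and then $b=af=0$ a.e., as claimed.

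The main obstacle—and the place where the argument must be arranged carefully—is precisely the avoidance of division by $a(y)$: a naive passage to $f=b/a$ is useless because $a$ may vanish or be small on a large set and is not assumed continuous. The identity above sidesteps this by retaining only $a(x)$ in the denominator and absorbing all $y$-dependence into the \emph{differences} $b(y)-b(x)$ and $a(y)-a(x)$, whose $p$-th powers are integrable against the singular kernel by the very hypothesis $a,b\in W^{\eps,p}$; the boundedness of $f$ is exactly what controls the cross term $(a(y)-a(x))f(y)$.
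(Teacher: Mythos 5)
Your proof is correct and follows essentially the same route as the paper's: the identical algebraic identity $a(x)(f(y)-f(x))=(b(y)-b(x))-(a(y)-a(x))f(y)$ combined with Lemma~\ref{L:fnotHeps} to force $a=0$ a.e. The only cosmetic difference is that you fix $x$, use the pointwise inequality $(\alpha+\beta)^{p}\le 2^{p-1}(\alpha^{p}+\beta^{p})$ and Fubini to locate a good point, whereas the paper applies the triangle (Minkowski) inequality directly to the double Gagliardo integral and reads off the same conclusion.
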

\begin{proof}
Write the $W^{\varepsilon,p}$ seminorm as in \eqref{E:sp-seminorm}
$$
 |b|_{\varepsilon,p}=
  \Big(\int_{0}^{2\pi}\int_{0}^{2\pi}\frac{|b(y)-b(x)|^{p}}{|y-x|^{1+p\varepsilon}}dy\,dx\Big)^{\frac1p}\,.
$$
Using
$$
 b(y)-b(x)=(f(y)-f(x))a(x) + f(y)(a(y)-a(x))
$$
and the triangle inequality, we find for $a,b\in W^{\eps,p}(0,2\pi)$
$$ 
 \Big(\int_{0}^{2\pi}\int_{0}^{2\pi}\frac{|a(x)|^{p}\,|f(y)-f(x)|^{p}}{|y-x|^{1+p\varepsilon}}dy\,dx\Big)^{\frac1p} 
 \le
 |b|_{\varepsilon,p} + \|f\|_{L^{\infty}(\T)}|a|_{\varepsilon,p} < \infty\,.
$$
Because of \eqref{E:fnotHeps} from Lemma~\ref{L:fnotHeps}, this implies $a(x)=0$ for almost all $x\in\T$ and then $b=af=0$.

\end{proof}

%==== Section ============
\section{2D domain with limited regularity}\label{S:d=2}

Let $F(x)=1+\int_{0}^{x}f(t)dt$. 
Then $F\in C^{1}(\T)$, $F'=f$, and $\frac12<F(x)<\frac32$.

The latter estimate follows easily from 
$$
 |F(x)-1|=|\sum_{k=1}^{\infty}a_{k}\tfrac{1-\cos(b_{k}x)}{b_{k}}|
 \le
 2^{-q}\sum_{k=1}^{\infty}2\,q^{-k} = 2^{1-q}\tfrac1{q-1} \le \frac12\,.
$$
We define now the $C^{1}$ domain $\omega\subset\R^{2}$ using polar coordinates $(r,\theta)$
$$
 \omega  = \{(r,\theta) \mid r<F(\theta)\}\,.
$$
\begin{proposition}
\label{P:normaltrace}
 Let $p\ge1$, $\eps>0$ and $u\in W^{\frac1p+\varepsilon,p}(\omega;\C^{2})$ be such that its normal trace
 $n\cdot u$ vanishes on $\partial\omega$. Then $u=0$ on $\partial\omega$. The same conclusion is valid when the tangential trace $n\times u$ vanishes on $\partial\omega$. 
\end{proposition}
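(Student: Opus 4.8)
The plan is to reduce the vanishing of the full trace of $u$ to the multiplicative separation property of $f$ established in Proposition~\ref{P:sep}. Throughout I may assume $0<\eps<1$: since $W^{\frac1p+\eps,p}(\omega)\subset W^{\frac1p+\eps',p}(\omega)$ whenever $0<\eps'<\eps$, it suffices to treat small $\eps$, and the range $0<\eps<1$ is precisely what is needed below for the trace theorem, for the pointwise multiplier properties, and for the hypothesis of Proposition~\ref{P:sep}.

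First I would record the boundary trace. Because $\omega$ is a $C^{1}$ (hence Lipschitz) domain and $\frac1p<\frac1p+\eps<1+\frac1p$, the trace operator maps $W^{\frac1p+\eps,p}(\omega;\C^{2})$ boundedly into $W^{\eps,p}(\partial\omega;\C^{2})$. The boundary is the curve $\theta\mapsto X(\theta)=F(\theta)(\cos\theta,\sin\theta)$; since $F'=f$ is continuous and $\tfrac12<F<\tfrac32$, the speed $|X'(\theta)|=\sqrt{F(\theta)^{2}+f(\theta)^{2}}$ is continuous and bounded above and below, so the arc-length parametrization is bi-Lipschitz in $\theta$. A bi-Lipschitz change of variables preserves the Gagliardo seminorm \eqref{E:sp-seminorm} up to constants when $0<\eps<1$, hence $W^{\eps,p}(\partial\omega)$ is identified with $W^{\eps,p}(\T)$. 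Consequently the Cartesian trace components $u_{1},u_{2}$, viewed as functions of $\theta$, lie in $W^{\eps,p}(\T)$.

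Next I would pass to the moving frame $e_{r}=(\cos\theta,\sin\theta)$, $e_{\theta}=(-\sin\theta,\cos\theta)$ and set $u_{r}=u\cdot e_{r}=u_{1}\cos\theta+u_{2}\sin\theta$ and $u_{\theta}=u\cdot e_{\theta}=-u_{1}\sin\theta+u_{2}\cos\theta$. Since $\cos\theta,\sin\theta\in C^{\infty}(\T)$ are pointwise multipliers on $W^{\eps,p}(\T)$ for $0<\eps<1$, both $u_{r}$ and $u_{\theta}$ belong to $W^{\eps,p}(\T)$. Differentiating $X$ gives the (unnormalized) outward normal $n\propto F\,e_{r}-f\,e_{\theta}$ and tangent $\tau\propto f\,e_{r}+F\,e_{\theta}$, so that on $\partial\omega$ the two boundary conditions read $n\cdot u=0\iff F u_{r}=f u_{\theta}$ and $n\times u=0\iff f u_{r}+F u_{\theta}=0$.

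Finally I would invoke the separation property. In the normal-trace case, because $1/F\in C^{1}(\T)$ is again a multiplier on $W^{\eps,p}(\T)$, setting $a=u_{\theta}/F$ and $b=u_{r}$ gives $a,b\in W^{\eps,p}(\T)$ with $af=b$; in the tangential-trace case I take instead $a=-u_{r}/F$, $b=u_{\theta}$, once more with $af=b$. Proposition~\ref{P:sep} then forces $a=b=0$ in either case, whence $u_{r}=u_{\theta}=0$ and $u=0$ on $\partial\omega$. I expect the only delicate point to be the trace step: the boundary is only $C^{1}$, so its arc-length parametrization is merely bi-Lipschitz rather than $C^{1}$ (because $f$ is nowhere better than continuous), and one must therefore work with a bi-Lipschitz, not smooth, change of variables and use $0<\eps<1$ throughout to keep the trace, invariance, and multiplier statements valid.
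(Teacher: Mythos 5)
Your proof is correct and follows essentially the same route as the paper: take the boundary traces in $W^{\eps,p}(\T)$ via the polar parametrization $\theta\mapsto u_j(F(\theta),\theta)$, rewrite the boundary condition in the frame $(e_r,e_\theta)$ as a relation $af=b$ with $a,b\in W^{\eps,p}(\T)$, and invoke Proposition~\ref{P:sep}. The only cosmetic differences are that the paper sets $a=u_\theta$, $b=F\,u_r$ (multiplying by $F$ rather than dividing by it) and handles the tangential case by a rotation by $\pi/2$ instead of a second direct computation; your explicit treatment of the trace theorem, the bi-Lipschitz parametrization, and the reduction to $0<\eps<1$ simply spells out what the paper leaves implicit.
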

\begin{proof} (Following Filonov \cite[\S5]{Filonov97})
The unit normal $n$ on $\partial\omega$ has the Cartesian components
$$
  n_{1}=(F^{2}+f^{2})^{-\frac12}(F\cos\theta + f\sin\theta),\quad
  n_{2}=(F^{2}+f^{2})^{-\frac12}(F\sin\theta - f\cos\theta)\,.
$$
Therefore the condition $n_{1}u_{1}+n_{2}u_{2}=0$ implies $af=b$ if we define
$$
 a=u_{2}\cos\theta-u_{1}\sin\theta\,,\quad
 b=(u_{1}\cos\theta+u_{2}\sin\theta)F
$$
Now, since the traces $u_{j}$ on $\partial\omega$, understood as functions 
$\theta\mapsto u_{j}(F(\theta),\theta)$ on $\T$, belong to $W^{\varepsilon,p}(\T)$, we also have 
$a,b\in W^{\varepsilon,p}(\T)$. According to Proposition~\ref{P:sep} we find $a=b=0$, which implies $u_{1}=u_{2}=0$ on $\partial\omega$.
The result using vanishing tangential trace follows by a rotation by $\pi/2$.
\end{proof}

\begin{corollary}
\label{C:dirneu}
(i) There exists $g\in C^{\infty}(\overline\omega)$ such that the solution $v_{D}\in H^{1}_{0}(\omega)$ of the Dirichlet problem
$$
 \Delta v_{D}=g \;\mbox{ in }\omega\,;\quad v_{D}=0 \; \mbox{ on }\partial\omega
$$
does not belong to $W^{1+\frac1p+\varepsilon,p}(\omega)$ for any $\epsilon>0$, $p\ge1$.\\
(i) There exists $g\in C^{\infty}(\overline\omega)$ such that any solution $v_{N}\in H^{1}(\omega)$ of the Neumann problem
$$
 \Delta v_{N}=g \;\mbox{ in }\omega\,;\quad \partial_{n}v_{N}=0 \; \mbox{ on }\partial\omega
$$
does not belong to $W^{1+\frac1p+\varepsilon,p}(\omega)$ for any $\varepsilon>0$, $p\ge1$.\\
\end{corollary}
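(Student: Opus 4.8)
The plan is to pass from the scalar potential to its gradient and invoke the separation result of Proposition~\ref{P:normaltrace}, which converts the assumed extra regularity into an over-determined boundary condition. Set $u=\nabla v$; since $v\in W^{1+\frac1p+\eps,p}(\omega)$ is equivalent to $u\in W^{\frac1p+\eps,p}(\omega;\C^{2})$, I would argue by contradiction, assuming the solution has this regularity, and show that then $\nabla v$ must vanish on all of $\partial\omega$. In the Dirichlet case, $v_{D}=0$ on $\partial\omega$ forces the tangential trace $n\times\nabla v_{D}$, which is the surface derivative of the boundary values, to vanish; in the Neumann case, $\partial_{n}v_{N}=0$ is exactly the vanishing of the normal trace $n\cdot\nabla v_{N}$. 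Either way Proposition~\ref{P:normaltrace} applies and yields $\nabla v=0$ on $\partial\omega$, i.e. the Dirichlet and the Neumann data vanish simultaneously.

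For part (i) I would take $g\equiv1$. The extra regularity would give $\partial_{n}v_{D}=0$ on $\partial\omega$, so the divergence theorem (Green's first identity) yields
\[
 |\omega| = \int_{\omega}g\,dx = \int_{\omega}\Delta v_{D}\,dx
          = \int_{\partial\omega}\partial_{n}v_{D}\,ds = 0,
\]
which is absurd. This settles the Dirichlet problem, with $g\equiv1\in C^{\infty}(\overline\omega)$.

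For part (ii) the solution is only determined up to an additive constant, but $\nabla v_{N}=0$ on $\partial\omega$ now gives both $\partial_{n}v_{N}=0$ and, after subtracting the (constant) boundary value, $v_{N}=0$ on $\partial\omega$; thus $v_{N}$ satisfies the homogeneous Dirichlet and Neumann conditions at once. I would choose $g=\phi-c$, where $\phi=x_{1}^{2}-x_{2}^{2}$ is a nonconstant harmonic polynomial and $c=|\omega|^{-1}\int_{\omega}\phi\,dx$ makes $\int_{\omega}g\,dx=0$ (so that the Neumann problem is solvable) while keeping $g\in C^{\infty}(\overline\omega)$. Applying Green's second identity to $v_{N}$ and $\phi$, both boundary terms drop out because $v_{N}$ and $\partial_{n}v_{N}$ vanish on $\partial\omega$ and $\Delta\phi=0$, leaving
\[
 0 = \int_{\omega}\phi\,\Delta v_{N}\,dx = \int_{\omega}\phi\,g\,dx
   = \int_{\omega}(g+c)\,g\,dx = \int_{\omega}g^{2}\,dx,
\]
where the last equality uses $\int_{\omega}g\,dx=0$. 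Since $g\not\equiv0$, this is a contradiction.

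The conceptual heart of the argument is the over-determination produced by Proposition~\ref{P:normaltrace}: once one accepts that the extra regularity forces \emph{both} boundary conditions, the contradictions above are elementary. The main technical point to verify is therefore the legitimacy of the two integrations by parts at the regularity $v\in W^{1+\frac1p+\eps,p}(\omega)$ assumed for contradiction; here $\nabla v$ has a genuine $W^{\eps,p}(\partial\omega)\subset L^{1}(\partial\omega)$ trace and $\Delta v=g\in L^{\infty}$, so Green's identities are valid and the boundary integrals make classical sense. One must also carefully identify the tangential trace of $\nabla v_{D}$ with the surface derivative of $v_{D}|_{\partial\omega}$, as used in the first paragraph.
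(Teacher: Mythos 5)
Your proof is correct and follows essentially the same route as the paper's: Proposition~\ref{P:normaltrace} upgrades the assumed extra regularity to simultaneous Dirichlet and Neumann conditions, $g=1$ with the divergence theorem settles the Dirichlet case, and a nonzero zero-mean harmonic function together with Green's second identity settles the Neumann case. Your only deviation---taking $g=\phi-c$ with $\phi=x_{1}^{2}-x_{2}^{2}$ and $c$ its mean value, instead of the paper's $g=\alpha x_{1}x_{2}+\beta(x_{1}^{2}-x_{2}^{2})$ with coefficients tuned to make $\int_{\omega}g=0$---is an equivalent way of producing the required harmonic $g$ orthogonal to constants, so the two arguments are the same in substance.
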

\begin{proof}
For $v_{D}$ one can take $g=1$. Set $u=\nabla v_{D}$. If 
$v_{D}\in W^{1+\frac1p+\varepsilon,p}(\omega)$, then $u$ satisfies the hypotheses of Proposition~\ref{P:normaltrace} with vanishing tangential trace. Hence also the normal trace of $u$ vanishes, i.e. $\partial_{n}v_{D}=0$ on $\partial\omega$. Then Green's formula implies $\int_{\omega}g=0$, which is not the case.

For $v_{N}\in W^{1+\frac1p+\varepsilon,p}(\omega)$ one obtains similarly that the tangential derivative on the boundary vanishes, hence the trace of $v_{N}$ on $\partial\omega$ is constant, without loss of generality equal to zero. Thus $v_{N}$ is also solution of the Dirichlet problem. That there exists $g\in L^{2}(\omega)$ for which this is impossible can be seen as follows:

Let $g$ be a non-zero harmonic polynomial such that $\int_{\omega}g=0$, for example $g(x_{1},x_{2})=\alpha x_{1}x_{2} +\beta (x_{1}^{2}-x_{2}^{2})$ with suitably chosen coefficients $\alpha,\beta\in\R$. Then $v_{N}$ exists, and Green's formula gives the contradiction
$$
 0=\int_{\partial\omega}(\partial_{n}v_{N}\,g-v_{N}\partial_{n}g)ds
  =\int_{\omega}(\Delta v_{N}\, g- v_{N}\Delta g)dx = \int_{\omega}g^{2}dx \,.
$$
\end{proof}

\begin{remark}
\label{R:DirEF} 
No eigenfunction of the Laplacian with Dirichlet conditions on $\omega$ can belong to $W^{1+\frac1p+\varepsilon,p}(\omega)$ with $\varepsilon>0$, because it would also have vanishing normal derivative. Its extension by zero outside $\omega$ would then be a Dirichlet eigenfunction with the same eigenvalue on any domain containing $\omega$. This contradicts for example the well known behavior of Dirichlet eigenvalues on disks or squares with varying size. It contradicts also the well known interior analyticity of Dirichlet eigenfunctions.
\end{remark} 

%==== Section ============
\section{Example in higher dimensions}\label{S:d>2}
From $\omega\subset\R^{2}$ one can construct $\Omega\subset\R^{d}$ as follows (see \cite{Filonov97}, for $n=3$ also \cite[\S6]{BuCoSh02}). In cylindrical coordinates $(r,\theta,z)$, $z\in\R^{d-2}$:
$$
 \Omega = \{ (r,\theta,z) \mid \frac{r^{2}}{F(\theta)^{2}} + |z|^{2}<1\}
$$
The intersection with the plane $z=z_{0}$ gives for $|z_{0}|<1$ the scaled domain $\sqrt{1-|z_{0}|^{2}}\,\omega$.
One can still prove that for this domain $\Omega$ and $0<\epsilon<1$ there holds 
\begin{equation}
\label{E:W0Omega}
 W^{1+\frac1p+\varepsilon,p}(\Omega)\cap W^{1,p}_{0}(\Omega)=W^{1+\frac1p+\varepsilon,p}_{0}(\Omega)\,.
\end{equation}
Indeed, suppose that $v\in W^{1+\frac1p+\varepsilon,p}(\Omega)$, $v=0$ on $\partial\Omega$ and let $u=\nabla v$. Then the tangential components of $u$ are zero on the boundary, and we have to show that the normal component of $u$ vanishes, too, on $\partial\Omega$. Define
$$
  \tilde u(r,\theta,z)=u(\sqrt{1-|z|^{2}}\,r,\theta,z)\,.
$$
Then $\tilde u$ is defined on the product domain
$$
 \tilde\Omega = \omega\times B_{1} = \{(r,\theta,z)\mid (r,\theta)\in\omega, |z|<1\}\,.
$$
For any $\delta\in(0,1)$, let $\tilde\Omega_{\delta}=\omega\times B_{\delta}$. Then $\tilde u$ restricted to $\tilde\Omega_{\delta}$ belongs to 
$$
 W^{\frac1p+\eps,p}(\tilde\Omega_{\delta};\C^{d}) 
 \subset L^{p}\big(B_{\delta};W^{\frac1p+\eps,p}(\omega;\C^{d})\big)\,,
$$
and for almost every $z_{0}\in B_{\delta}$, the restriction $w_{z_{0}}$ of $\tilde u$ to the plane $z=z_{0}$ belongs to $W^{\frac1p+\eps,p}(\omega,\C^{d})$. The vanishing of the tangential components of $u$ on $\partial\Omega$ implies that the component of $w_{z_{0}}$ that is parallel to the plane $z=0$ and tangential to $\partial\omega$ vanishes on $\partial\omega$. Then Proposition~\ref{P:normaltrace} tells us that the component of $w_{z_{0}}$ that is parallel to the plane $z=0$ and normal to $\partial\omega$ vanishes on $\partial\omega$, too. This means that at such a point $(r,\theta,z)\in\partial\Omega$ with 
$(\sqrt{1-|z|^{2}}\,r,\theta)\in\partial\omega$, $z=z_{0}$, in addition to the tangential components a component of $u$ vanishes that is not tangential, and hence all components of $u$ vanish there. Since this is true for almost all $z_{0}$ satisfying $|z_{0}|<\delta$ and for all $0<\delta<1$, we see that the trace of $u$ on $\partial\Omega$ is zero, which proves \eqref{E:W0Omega}.

The non-regularity result of Theorem~\ref{T:main} for the Dirichlet problem in $\Omega$ then follows in the same way as in the two-dimensional case. In particular, one can take $g=1$ for the counterexample.

For the Neumann problem, a slightly different variant of adding $d-2$ variables works, and this variant could also be used for the Dirichlet problem, giving a counterexample with a somewhat less regular right hand side $g$. For this variant, \eqref{E:W0Omega} still holds. We redefine the domain $\Omega$ so that it contains a cylindrical part (see also \cite[\S5.2]{Filonov97}). This is done by modifying the function $1-|z|^{2}$ in the previous example. Choose a decreasing $C^{\infty}$ function $\mu$ on $\R_{+}$ satisfying
$$
 \mu(t)=1\; \mbox{ for } t\le1\,;\qquad
 \mu(t)\le0 \; \mbox{ for } t\ge4\,;\qquad
 \mu'(t)<0 \; \mbox{ for } t\ge2\,.
$$
and define
\begin{equation}
\label{E:Omegad}
 \Omega = \{ (r,\theta,z) \mid r^{2}< \mu(|z|^{2})\,F(\theta)^{2}\}\,.
\end{equation}
It is not hard to see that $\Omega$ has a $C^{1}$ boundary.

We now use the two-dimensional example presented in the previous section and denote by $v_{0}$ the function found there that satisfies the Neumann problem on $\omega$ with right hand side $g_{0}\in C^{\infty}(\overline\omega)$ and that does not belong to any $W^{1+\frac1p+\eps,p}(\omega)$ for $\eps>0$, $p\ge1$.
In addition, we choose a function $\chi\in C^{\infty}_{0}(\overline{\R_{+}})$ satisfying
$\chi(t)=1$ for $t<\frac12$, $\chi(t)=0$ for $t\ge1$.
Then we define
$$
  v(x,z)=v_{0}(x)\,\chi(|z|);\qquad
  g(x,z)=g_{0}(x)\,\chi(|z|) + v_{0}(x)\Delta_{z}\chi(|z|);\qquad
  (x\in\omega,\; |z|<1)\,.
$$
Initially, $v$ and $g$ are defined on the cylinder $\omega\times B_{1}\subset\Omega$, and we extend them by zero on the rest of $\Omega$.

One easily verifies that $v$ satisfies 
$$
  \Delta v = g\;\mbox{ in }\Omega\,;\qquad
  \partial_{n}v=0\;\mbox{ on }\partial\Omega\,.
$$
Noting that both $\chi(|z|)$ and $\Delta_{z}\chi(|z|)$ define $C^{\infty}(\overline\Omega)$ functions and using the regularity of 
$v_{0}\in W^{1+\frac1p,p}(\omega)$ for all $p>1$, so that $v_{0}$ is H\"older continuous on $\overline\omega$, one finds that $g$ is H\"older continuous on $\overline\Omega$. Finally the non-regularity of $v_{0}$ implies clearly that also $v\not\in $$W^{1+\frac1p+\eps,p}(\Omega)$ for $\eps>0$, $p\ge1$.

This concludes the proof of Theorem~\ref{T:main}.

%\nocite{*}
%\bibliographystyle{siam}
%\bibliography{H32}
%%%%%%%%% the bbl file

%%%%%%%%% end bbl

\end{document}